\definecolor{Gray}{gray}{0.9}
\title{Shadow wave solutions for a scalar two-flux conservation law with Rankine-Hugoniot deficit}
\newtheorem{thm}{Theorem}[section]
\newtheorem{df}{Definition}[section]
\newtheorem{ex}{Example}[section]
\newcommand{\defeq}{\mathrel{:\mkern-0.25mu=}}
\title{Shadow wave solutions for a scalar two-flux conservation law with Rankine-Hugoniot deficit}
\author{Tanja  Kruni\'{c}, Marko Nedeljkov}
\date{}
\begin{document}
\maketitle

\begin{abstract}
The paper deals with scalar conservation laws
having a flux discontinuity at $x=0$ without a weak solution that
satisfies the classical Rankine--Hugoniot jump condition at $x=0$. 
We are using unbounded solutions in the form of shadow waves 
supported by the origin for solving that problem. 
The shadow waves are nets of piecewise constant
functions approximating a shock wave with added a delta function
and sometimes another unbounded part. 

\noindent
{\it AMS Mathematics Subject Classification}. 35L65, 35L67.
{\it keywords}: {conservation laws with discontinuous flux functions, shadow waves; delta shock waves, singular shock waves}
\end{abstract}

\section{Introduction}	

Suppose that a following scalar conservation law 
\begin{equation}
\label{mishrinaJednacina}
\begin{split}
\partial_{t} u+\partial_{x}f(x,u)&=0, 
 \end{split}
\end{equation}
with a discontinuous flux function (``two flux functions'')
\begin{equation}
\label{fluks}
f(x,u):=H(x)f_r(u)+(1-H(x))f_l(u)
\end{equation}
is given. Here,
$u=u(x,t)\in \Omega\subset\mathbb{R},\;(x,t)\in \mathbb{R}\times \mathbb{R}_{+}$, $f_l,f_r\in C^{1}(\Omega)$ and $H(x)$ is the Heaviside step
function. The functions $f_l$ and $f_r$ represent the left and right-hand
sides of the flux jump. They are called ``left'' and ``right flux'' 
for short. The aim of this paper is to solve the cases when there is 
no a weak solution consisting of classical waves (shock or rarefaction waves)
to the Riemann problem \eqref{fluks} with the initial data
\begin{equation}
u(x,0)=\left\{\begin{array}{ll}\label{kx}
	u_{0},& x<0\\
	u_{1},& x>0
	\end{array}\right..
\end{equation} 
For example, such a situation occurs when either $f_l$ or $f_r$
is of convex type, and the other of concave type and the graph of one of the
parts of the flux jump lies below the other part (i.e. $f_l(u)>$ or ($<$)
$f_r(u)$) on the entire domain $\Omega$, see Fig. \ref{noRH}. Convex (concave)
type function present a generalization of a convex (concave) function, 
the precise definition is given in Section \ref{ini}. 
The reason why there is no solution is that
the  Rankine-Huginiot jump condition between two sides of the flux
\begin{equation}\label{rh}f_l(u^-)=f_r(u^+),\end{equation}
with
\begin{equation}
\label{uPlus}
u^-=\lim_{x\rightarrow 0^-}u(x,t)\quad\text{and}\quad u^+=\lim_{x\rightarrow 0^+}u(x,t),
\end{equation}
cannot be satisfied. 

Our construction is based on solving the so-called overcompressive case first, i.e. the case when 
\begin{equation}\label{over}
f_{l}^{\prime}(u_0)\geq 0\geq f_{r}^{\prime}(u_1).
\end{equation}

\begin{figure}[t]
\begin{center}
\begin{tabular}[ht]{@{}cc@{}}\small{A)$f_l$ -- convex type, $f_r$ -- concave type}&\small{B)$f_l$ -- convex type, $f_r$ -- convex type}\\
\includegraphics[height=32mm]{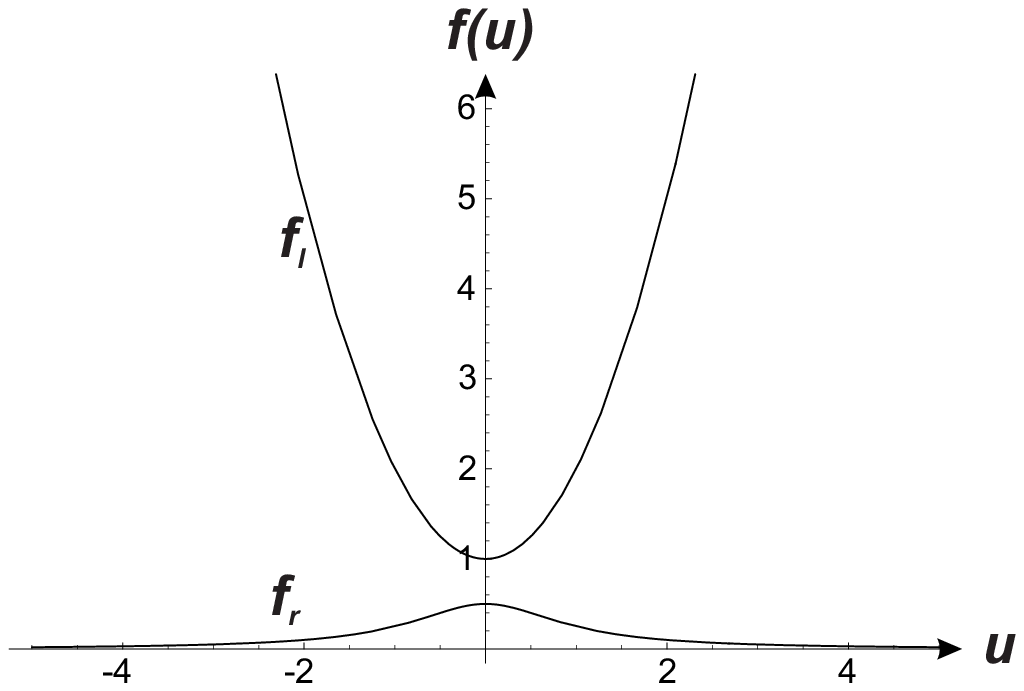} &\includegraphics[height=32mm]{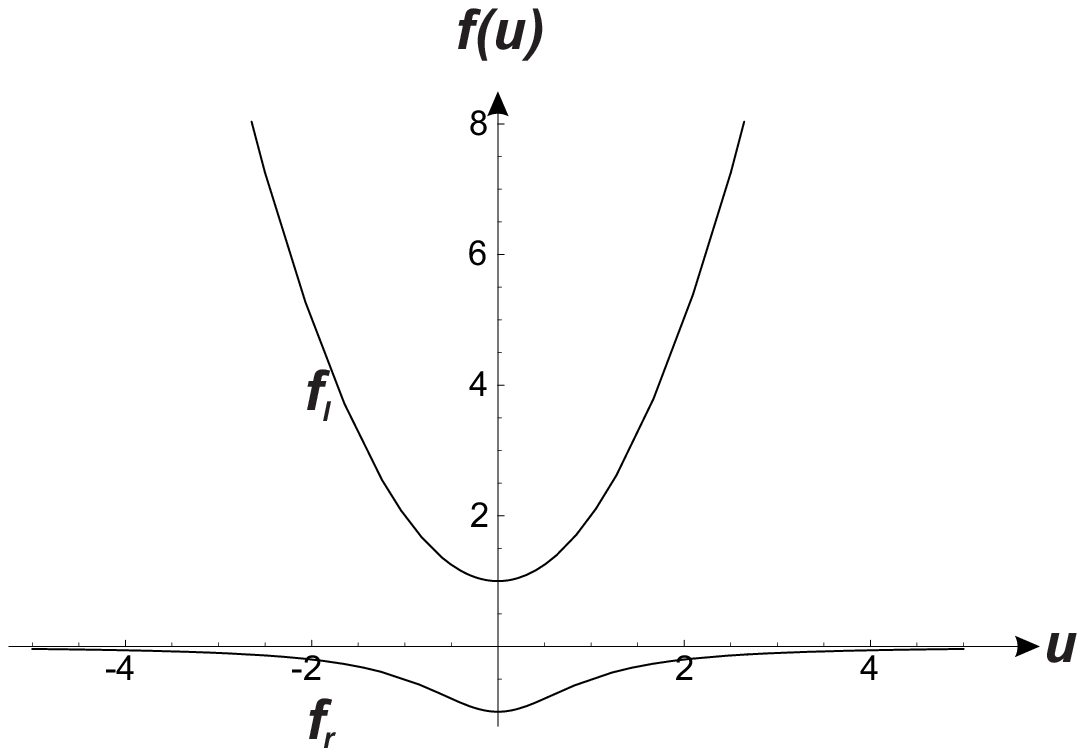}  \\
\tiny{$f_l=u^2+1,f_r=\dfrac{1}{2(u^2+1)}$ }& \tiny{$f_l=u^2+1,f_r=\dfrac{-1}{u^2+1}$ } \\
\small{C)$f_l$ -- concave type, $f_r$ -- convex type}&\small{D)$f_l$ -- concave type, $f_r$ -- concave type}\\
\includegraphics[height=32mm]{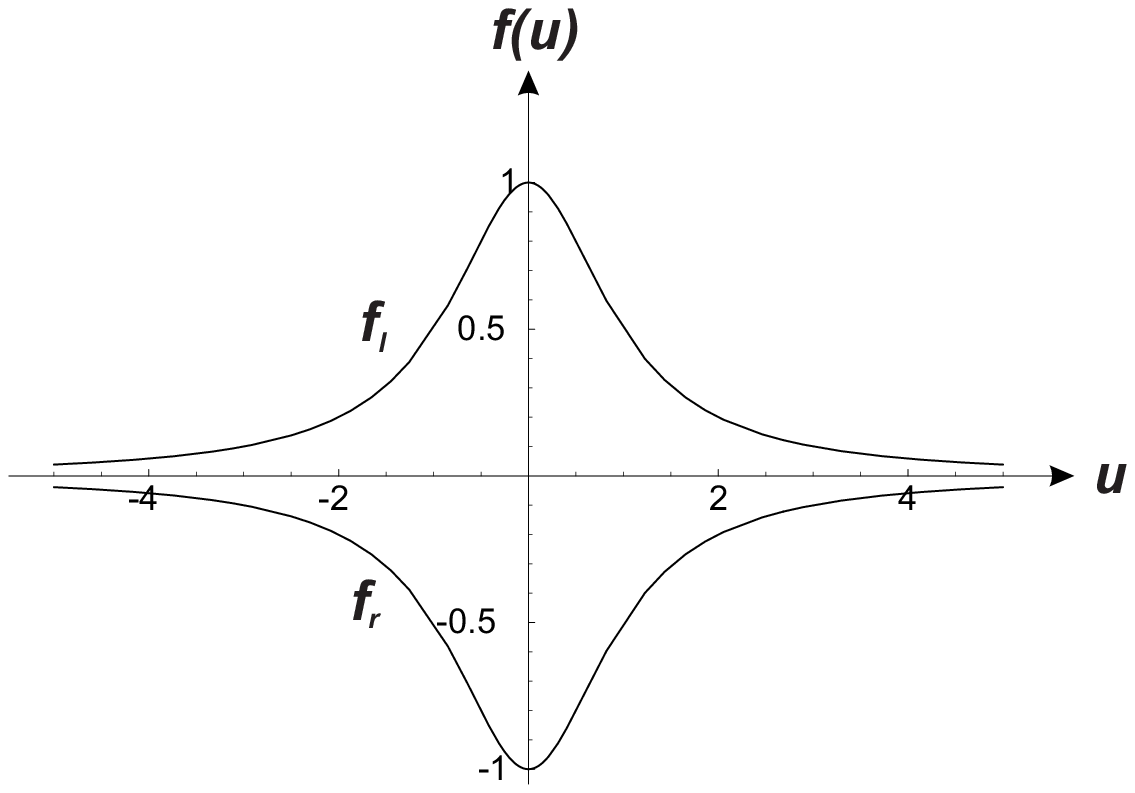} &\includegraphics[height=32mm]{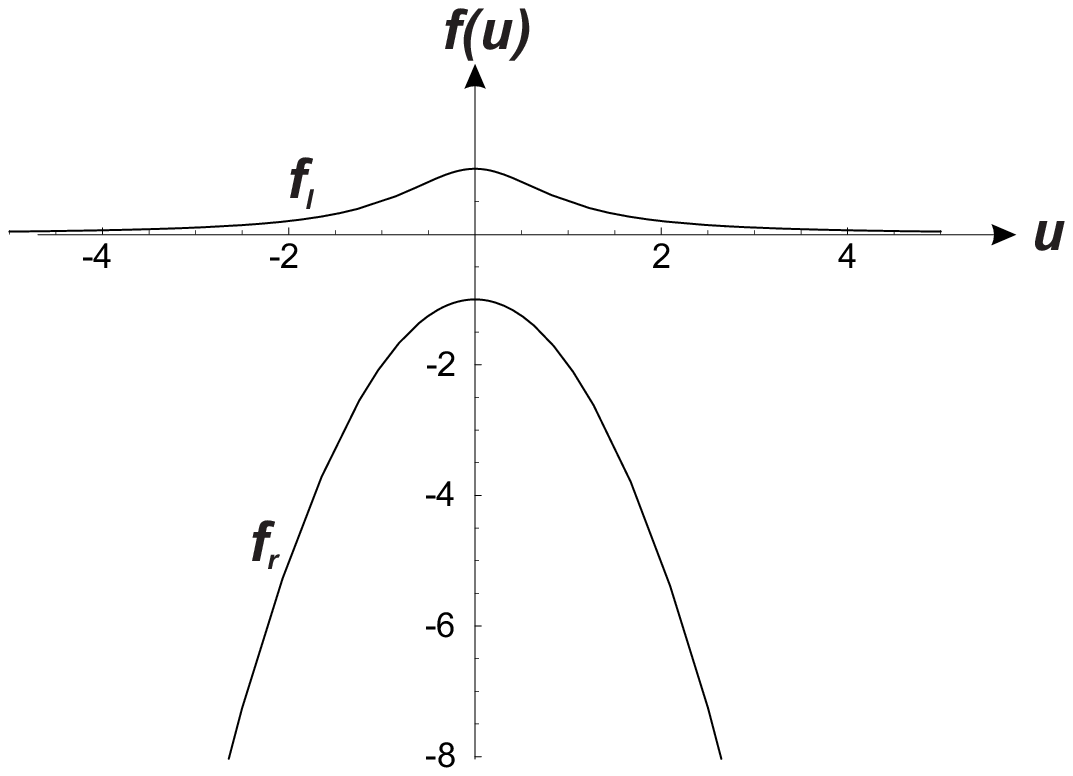}  \\
\tiny{$f_l=\dfrac{1}{u^2+1},f_r=-\dfrac{1}{u^2+1}$ }& \tiny{$f_l=-(u^2+1),f_r=\dfrac{1}{u^2+1}$ }
\end{tabular}
\end{center}
\caption{Examples of flux geometries with no classical solutions}
\label{noRH}
\end{figure}


Classical solutions of conservation laws with discontinuous flux functions 
have been widely investigated in the recent years.
Equations of the form \eqref{mishrinaJednacina} are considered by many authors 
under the assumption that both fluxes $f_{l},\;f_{r}$ are of convex (concave) 
type. In these cases, equation \eqref{mishrinaJednacina} has weak solutions 
consisting of a left-going (backward) and a right-going (forward) shock or 
rarefaction wave connected with a stationary shock wave which satisfies 
\eqref{rh} at $x=0$. There are different solution concepts and 
admissibility criteria for solutions. Some of them are the ``interface entropy 
condition'' (\cite{MishraRad}), the ``crossing condition'' (\cite{clari2}) and 
the ``existence of admissible discrete shock profiles'' condition 
(\cite{nase}).  Admissibility conditions 
for a general flux shapes are given in paper \cite{AC}. 
Let us note that in all these cases the primary assumption
is that the Rankine--Hugoniot condition is satisfied at the discontinuity point.
In \cite{signCh}, the interface admissibility conditions 
are extended to so-called ``convex-concave" fluxes  when one of them, $f_{l}$ 
or $f_{r}$ is of convex type, and the other one is of concave type. 
The authors look at the case when $f_{l}$, $f_{r}$ cross each other in 
some points $X$ and $Y$, and take initial data from the interval $[X,Y]$. 
The existence of solutions consisting of a backward and a forward shock or 
rarefaction wave connected with a 
stationary shock at $x=0$ is proved in this case.  

In \cite{general}, equation \eqref{mishrinaJednacina} is considered in the
context of certain flux geometries and sets of initial data for which the
Rankine-Hugoniot condition at $x=0$ cannot be satisfied, i.e.
the above conditions are not met. For example, 
when $f_l$ is of convex type and $f_r$ of concave type and
$f_l(u)>f_r(u)$ for all $u\in\Omega$, or when both fluxes are
affine and $f_{l}^{\prime}>0>f_r^{\prime}$.  The authors replace the
Rankine-Hugoniot condition with a relaxed ``generalized Rankine-Hugoniot 
condition'' and accordingly define bounded ``generalized entropy solutions'' 
(GES) which can be obtained as limits of a modified Godunov type finite 
difference scheme. In the aforementioned cases, the generalized entropy 
condition reads as $f_l(u^-)\geq f_r(u^+)$  and  GES just reflects the initial 
value discontinuity provided it satisfies the overcompressibility condition 
\eqref{over}. Relaxed solutions obtained in such a way do not satisfy equation 
\eqref{mishrinaJednacina} in the distributional sense. These results 
are compared with the ones obtained by using the staggered mesh Godunov type 
scheme in \cite{Karlsen}. They stated that in the observed cases this scheme 
gives a ``big overshoot'' at $x=0$, but coincides with GES apart from the 
``overshoot''. We try to give another explanation of that phenomena. 
The Rankine--Hugoniot deficit (when the condition can not be satisfied)
is connected with non--bounded generalized solutions containing the Dirac
delta function (delta, singular shocks, etc.). We use shadow waves
(\cite{Marko}) in order to obtain a solution. 
The shadow wave solution concept relies on representing 
singular solutions by nets of piecewise constant functions. The basic idea in 
constructing stationary shadow waves at $x=0$ is the perturbation of the speed 
$c=0$ of a wave with the states $u_{0}, u_{1}$ from the left and right side of 
the wave by a small parameter $\varepsilon$. After the perturbation, the 
states $u_{0}, u_{1}$ of the original wave are connected by three shocks: 
The first one connects $u_{0}$ and $u_{0,\varepsilon}$, the second 
$u_{0,\varepsilon}$ with $u_{1,\varepsilon}$, while $u_{1,\varepsilon}$ and 
$u_{1}$ are connected by the third one. The middle wave has zero speed for 
$\varepsilon<<1$, whereas the others have a speed of order $\varepsilon$.
The two intermediate states $u_{0,\varepsilon}$ and $u_{1,\varepsilon}$ tend 
to infinity when $\varepsilon\rightarrow 0$, and the solution tends to a 
singular shock wave or a delta shock wave (see Definition \ref{minor2} below). 
All calculations are made in the sense of distributions by using 
Rankine-Hugoniot conditions, i.e. the equality is taken to be the equality of 
the distributional limit, (see Section \ref{ini}). For comparison, one can see 
Fig. \ref{ilustracije1} as a representation of shadow waves and GES solutions 
obtained by numerical procedures. 
A precise definition of a shadow wave is given in 
Section \ref{ini}. The Rankine-Hugoniot deficit in this case is 
\begin{equation}\label{deficit}
\kappa\defeq f_l(u_0)-f_{r}(u_1),
\end{equation} 
\begin{figure}[t]
\begin{center}
\begin{tabular}[ht]{@{}cc@{}}
\includegraphics[height=45mm]{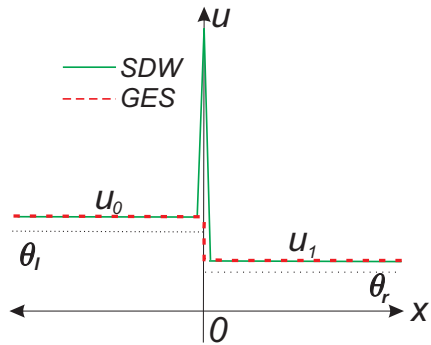} 
\end{tabular}
\end{center}
\caption{Shadow waves vs. GES}
\label{ilustracije1}
\end{figure}
and there is a solution in the form of a stationary shadow 
wave for nonzero Rankine-Hugoniot  deficit at $x=0$. 

Relation \eqref{over} is taken to be the admissibility criterion which means 
that all the characteristics run into the shock wave from both sides. It is 
called the overcompressibility condition in the sequel. Later in the paper, we 
consider a more general set of initial data \begin{equation}
\label{kxOpsti}
u(x,0)=\left\{\begin{array}{ll}
	u_{l},& x<0\\
	u_{r},& x>0
	\end{array}\right.
\end{equation}
which may not satisfy condition \eqref{over}. In this case, the states $u_l$ 
and $u_r$ have to be connected first by classical waves with some states $u_0$ 
and $u_1$ respectively, which satisfy condition \eqref{over}. The speed of the
wave connecting $u_l$ and $u_0$ has to be non-positive, whereas the speed of 
the wave connecting $u_1$ and $u_r$ has to be non-negative, of course. That 
condition provides a unique choice of the states $u_0$ and $u_1$, as one will 
see in Section \ref{riman}. 

This paper is organized in the following way. Suppose that $f_l$ is of 
convex type, $f_r$ of concave type and $f_l(u)>f_r(u)$ for all $u$. After some 
introductory facts and giving assumptions on the flux geometry in Section 
\ref{ini}, we prove the existence of shadow waves for 
\eqref{mishrinaJednacina}, \eqref{kx}, \eqref{over} in the overcompressive  case, in Section \ref{egzistencija}. In Section \ref{riman}, the results are 
extended to the general Riemann problem. Then we consider our results to 
equation 
\eqref{mishrinaJednacina} in the case of linear fluxes in Section 
\ref{prosirenje}.  Finally, in Section \ref{numerika111}, Eq. 
\eqref{mishrinaJednacina} is rewritten in the form of a system of conservation 
laws and solved numerically using Godunov's scheme. It is an experimental 
verification that the obtained unbounded solution agrees with the 
theoretically expected shadow wave, obtained by using the numerical test from 
\cite{natasa}. \\

\subsection{Preliminaries}\label{ini}
Let us fix the notation and assumptions used in the paper.
As already mentioned in the introduction, we are dealing with flux geometries 
for which the Rankine-Hugoniot condition cannot be satisfied. For simplicity, 
we will consider only the case when $f_l(u)>f_{r}(u)$ and the fluxes are of 
convex or concave type. These two function types are described  in the 
following definition.
\begin{df} \label{tip}
Let $f\in C^{1}(\Omega)$, $\Omega\subset \mathbb{R}$. Then $f$ is said to be

\noindent 
a convex type flux if it has one minimum and no maximum in $\Omega$,

\noindent
a concave type flux if it has one maximum and no minimum in $\Omega$.
\end{df}

The following four cases are possible, as shown in Fig. \ref{noRH}:

\noindent
Case A: $f_{l}$ is of convex type and $f_{r}$ is of concave type,

\noindent
Case B: both $f_{l}$ and  $f_{r}$ are of convex type,

\noindent
Case C: $f_{l}$ is of concave type and $f_{r}$ is of convex type, 

\noindent
Case D: both $f_{l}$ and $f_{r}$ are of concave type.
\medskip

Note that by Definition \ref{tip}, convex (concave) type fluxes need not to be convex (concave) functions, see Fig. \ref{noRH}. 
The analysis of the solution of Eq. \eqref{mishrinaJednacina} is very 
similar in all the cases A-D, so we consider only the flux geometry in Case A: \medskip

\noindent
\textbf{Assumption 1:} $f_{l}$ is of convex type, $f_{r}$ is of concave type 
and $f_l(u)>f_{r}(u)$ for all $u\in\Omega$.
\medskip

Let us denote a unique minimum of $f_{l}$ with $\theta_{l}$ and a 
maximum of $f_{r}$ with $\theta_{r}$, i.e.\
$f_{l}(\theta_{l})=\min_{u\in\Omega}f_{l}(u)$ and $f_{r}(\theta_{r})=\max_{u\in\Omega}f_{r}(u)$.
We say that 
$f(u)\sim u^{\nu}$ if there exists a constant $c\in(0,+\infty)$  
such that $\lim_{u \rightarrow\infty}\frac{f(u)}{u^{\nu}}=c$. 

Let $f_{1,\varepsilon}, f_{2,\varepsilon} $ be nets of locally integrable 
functions over some domain $\omega\subset\mathbb{R}\times\mathbb{R}_{+}$, and 
$\mathcal{D}^{\prime}(\omega)$ the space of distributions. 
The relation $\approx$ is defined by
$f_{1,\varepsilon}\approx f_1\in\mathcal{D}^{\prime}(\omega)$ if
$\int_{\omega}f_{1,\varepsilon}\varphi\rightarrow \langle 
f_{1}\delta,\varphi\rangle$
as $\varepsilon\rightarrow 0$ for every test function $\varphi\in 
C_{0}^{\infty}(\omega)$. Here $f_{1,\varepsilon}\approx f_{2,\varepsilon}$ 
means $f_{1,\varepsilon}-f_{2,\varepsilon}\approx 0$ and we say that 
$f_{1,\varepsilon}$ and $f_{2,\varepsilon}$ are distributionally equal (have 
the same distributional limit).

As noted in the introduction, a solution of Eq. 
\eqref{mishrinaJednacina} containing stationary singular 
wave which fits the Rankine-Hugoniot deficit at $x=0$
is expected to appear. We will 
represent singular solutions with shadow waves, nets of piecewise constant 
functions defined as follows.
\begin{df}(\cite{Marko})\label{minor}
A shadow wave is a net of functions 
\begin{equation}\label{sdwdef}
u_{\varepsilon}(x,t)=\left\{
\begin{array}{l}
u_{0}\; ,\quad x<(c-\varepsilon)t\\
u_{0,\varepsilon} \;,\quad (c-\varepsilon)t<x< ct\\
u_{1,\varepsilon} \;,\quad ct<x<(c+\varepsilon)t\\
u_{1} \;, \quad x>(c+\varepsilon)t
\end{array}
\right.,
\end{equation}
where
\begin{equation}\label{alfa i beta}
u_{0,\varepsilon}\sim \varepsilon^{-\alpha},\;u_{1,\varepsilon}\sim\varepsilon^{-\beta}, 
\end{equation}
and $\alpha,\beta\geq 0$. We call $u_{0,\varepsilon}$ and $u_{1,\varepsilon}$ 
intermediate states, and $c$ presents the speed of the shadow wave.\end{df}  
A speed of a stationary shadow wave equals zero, i.e.\ 
$c=0$ in \eqref{sdwdef}.
The value
$k_{\varepsilon}(t) =\varepsilon u_{0,\varepsilon}t + \varepsilon 
u_{1,\varepsilon}t$
is called strength of the shadow wave. A shadow wave is called weakly unique 
if all solutions have the same distributional limit. Here, uniqueness means
that $\lim _{\varepsilon\rightarrow 0}k_\varepsilon(t)=k(t)$
is unique. A limit has the form
\begin{eqnarray*}
u\left( x,t\right) &\approx &\left\{ 
\begin{array}{c}
u_{0},\ x<0 \\ 
u_{1,}\ x>0
\end{array}
\right. + k (t) \delta \left( x\right) , 
\end{eqnarray*} 
where $\delta(x)$ is the Dirac delta function.
The definition bellow defines when a singular solution 
obtained as a limit of a shadow wave as $\varepsilon\rightarrow 0$ is a delta 
shock or a singular shock wave.
\begin{df}\label{minor2}(\cite{Marko}) If $\alpha=1$ or $\beta=1$ in \eqref{alfa i beta}, then $u_{0,\varepsilon},u_{1,\varepsilon}$ are ``major'' 
or $\varepsilon^{-1}$ components. Otherwise, if either $0<\alpha<1$ or 
$0<\beta<1$ then the corresponding component $ u_0,\varepsilon $ or $u_{1} 
\varepsilon$  is called a ``minor component''. 

A delta shock wave is a shadow wave associated with a $\delta$ distribution 
with all minor components having finite limits as $\varepsilon\rightarrow 0$.

A singular shock wave has the feature that either $\left | 
u_{0,\varepsilon}\right | \rightarrow\infty$ and $\varepsilon\left | 
u_{0,\varepsilon}   \right |\rightarrow 0$ or $\left | u_{1,\varepsilon}\right 
| \rightarrow\infty$ and $\varepsilon\left | u_{1,\varepsilon}   \right 
|\rightarrow 0$ when $\varepsilon\rightarrow 0$.
\end{df}

\section{\textbf{Shadow Waves for the Two-Flux Equation}}

We start by considering the overcompressive case, i.e. when \eqref{over} is 
satisfied together with Assumption 1. Suppose that 
all assumptions from Section \ref{ini} are satisfied.
For a solution, we look at the set of all elementary waves plus a shadow wave.

\subsection{Existence of shadow waves in the overcompressive 
case}\label{egzistencija}

Let $f$ from Eq. \eqref{mishrinaJednacina} 
with the initial data \eqref{kx} satisfy
condition \eqref{over}. Then, there is a unique solution 
in the form of a stationary shadow wave. 
\begin{thm}
\label{lema1}
Let
\begin{equation}
\label{let}
f_l(u)\sim u^{\nu_{1}},\; f_r(u)\sim \chi u^{\nu_{2}},
\text{ as } u\to \infty,
\end{equation} 
where $\nu_1, \nu_2\geq 0$,  $\chi\in\{-1,1\}$ and 
\begin{equation}
\label{subquadratic}
\begin{split}
\quad \min(\nu_1,\nu_2)<1 & \quad\text{if }\quad \chi=1, \\
\text{and } \;\nu_1=\nu_2\in[0,2) &\quad \text{if }\quad \chi=-1.
\end{split}
\end{equation} 
Equation \eqref{mishrinaJednacina} together with the initial data of the form 
\eqref{kx} satisfying \eqref{over} has a shadow wave as a 
solution which tends to a singular wave featuring the Dirac $\delta$ function 
\begin{eqnarray}
u\left( x,t\right) &\approx &\left\{ 
\begin{array}{c}
u_{0},\ x<0 \\ 
u_{1,}\ x>0
\end{array}
\right. +\kappa t\delta \left(
x\right) ,  \label{konacno resenje} \end{eqnarray}
where $\kappa$ is given by \eqref{deficit}.
\end{thm}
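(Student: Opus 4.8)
The plan is to exhibit the net $u_{\varepsilon}$ explicitly as a stationary shadow wave \eqref{sdwdef} with $c=0$, i.e.\ with jump lines $x=-\varepsilon t$, $x=0$ and $x=\varepsilon t$, and then to fix the two free intermediate states $u_{0,\varepsilon},u_{1,\varepsilon}$ so that the distributional residual of \eqref{mishrinaJednacina} tends to $0$. First I would observe that the construction automatically respects the data \eqref{kx}, since the two intermediate strips have zero width at $t=0$, and that the outer speeds $-\varepsilon<0<\varepsilon$ together with \eqref{over} make the wave overcompressive (characteristics enter all three lines). Throughout I take $\kappa>0$, so that the intermediate states must blow up to $+\infty$; the case $\kappa<0$ is symmetric.

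Next I would write $\partial_t u_{\varepsilon}+\partial_x f(x,u_{\varepsilon})$ as a sum of three jump measures, one per line, recalling that a line of speed $s$ across which $u$ and the flux jump by $[u],[f]$ contributes $([f]-s[u])\,\varphi(st,t)$. Since the left strip carries flux $f_l$ and the right strip flux $f_r$, pairing with a test function $\varphi$ gives $\langle\partial_t u_{\varepsilon}+\partial_x f(x,u_{\varepsilon}),\varphi\rangle=\int_0^\infty\big(A_1\varphi(-\varepsilon t,t)+A_2\varphi(0,t)+A_3\varphi(\varepsilon t,t)\big)\,dt$, with $A_1=f_l(u_{0,\varepsilon})-f_l(u_0)+\varepsilon(u_{0,\varepsilon}-u_0)$, $A_2=f_r(u_{1,\varepsilon})-f_l(u_{0,\varepsilon})$ and $A_3=f_r(u_1)-f_r(u_{1,\varepsilon})-\varepsilon(u_1-u_{1,\varepsilon})$. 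Taylor expanding $\varphi(\mp\varepsilon t,t)$ at $x=0$, the two occurrences of $f_l(u_{0,\varepsilon})$ and of $f_r(u_{1,\varepsilon})$ cancel, so the coefficient of $\varphi(0,t)$ collapses to $A_1+A_2+A_3=-\kappa+\varepsilon(u_{0,\varepsilon}+u_{1,\varepsilon})-\varepsilon(u_0+u_1)$. Hence the natural normalization is \textbf{(A)} $\varepsilon(u_{0,\varepsilon}+u_{1,\varepsilon})\to\kappa$, which simultaneously annihilates this leading term and, by the definition of the strength $k_{\varepsilon}(t)=\varepsilon(u_{0,\varepsilon}+u_{1,\varepsilon})t$, forces the limit profile \eqref{konacno resenje} with weight $\kappa t$. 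The first-order term carries the coefficient $\varepsilon(-A_1+A_3)$, whose only possibly large piece is $\varepsilon\big(f_l(u_{0,\varepsilon})+f_r(u_{1,\varepsilon})\big)$, so I would impose the second requirement \textbf{(B)} $\varepsilon\big(f_l(u_{0,\varepsilon})+f_r(u_{1,\varepsilon})\big)\to 0$.

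I would then construct states meeting (A) and (B), splitting by the sign $\chi$. When $\chi=1$ both fluxes tend to $+\infty$ and (B) cannot be met by cancellation, so I would place the $\varepsilon^{-1}$ (major) component on the side with the smaller exponent—say $\nu_1=\min(\nu_1,\nu_2)<1$, taking $u_{0,\varepsilon}\sim\kappa/\varepsilon$ and $u_{1,\varepsilon}$ bounded; then $f_l(u_{0,\varepsilon})\sim\varepsilon^{-\nu_1}$, so $\varepsilon f_l(u_{0,\varepsilon})\sim\varepsilon^{1-\nu_1}\to0$ gives (B) precisely because $\nu_1<1$. When $\chi=-1$ the two fluxes have opposite signs at infinity; here I would fix the sum by (A) exactly, write $u_{1,\varepsilon}=s$, $u_{0,\varepsilon}=\kappa/\varepsilon-s$, and apply the intermediate value theorem to $g_{\varepsilon}(s)=f_l(\kappa/\varepsilon-s)+f_r(s)$, which runs from large positive to large negative as $s$ increases, to pick $s_{\varepsilon}$ with $g_{\varepsilon}(s_{\varepsilon})=0$, i.e.\ $f_l(u_{0,\varepsilon})+f_r(u_{1,\varepsilon})=0$; the leading balance $c_l p^{\nu}=c_r(1-p)^{\nu}$ shows both states then blow up like a constant times $1/\varepsilon$, so they are genuine intermediate states in the sense of \eqref{sdwdef}.

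The step I expect to be the main obstacle is the second-order Taylor term, whose coefficient is $\tfrac{\varepsilon^2 t^2}{2}\varphi_{xx}(0,t)\,(A_1+A_3)$ and which contains $f_l(u_{0,\varepsilon})-f_r(u_{1,\varepsilon})\sim\varepsilon^{-\nu}$; unlike the first-order term this combination \emph{does not} cancel. It is controlled exactly by \eqref{subquadratic}: $\varepsilon^2\cdot\varepsilon^{-\nu}=\varepsilon^{2-\nu}\to0$ iff $\nu<2$ in the $\chi=-1$ regime, while in the $\chi=1$ regime $\varepsilon^2\varepsilon^{-\nu_1}\to0$ holds automatically since $\nu_1<1<2$ (there the binding constraint is the first-order term analyzed above). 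The higher-order Taylor remainders are dominated in the same way, using smoothness and compact support of $\varphi$. Collecting the three estimates yields $\langle\partial_t u_{\varepsilon}+\partial_x f(x,u_{\varepsilon}),\varphi\rangle\to0$ for every $\varphi$, so $u_{\varepsilon}$ is a shadow wave solution whose distributional limit is \eqref{konacno resenje}; since every admissible choice of intermediate states produces the same $k(t)=\kappa t$, the solution is weakly unique.
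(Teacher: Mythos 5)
Your proposal is correct in substance and, up to the construction step, follows the paper's route: the same stationary three-line ansatz, the same Rankine--Hugoniot/Taylor computation, and the same three limit conditions --- your (A), (B) and the second-order condition are precisely the paper's \eqref{1}, \eqref{2} (after its observation that \eqref{1} forces $\alpha,\beta\leq 1$, which kills the extra term in \eqref{2}) and \eqref{3}. Where you genuinely diverge is in how the intermediate states are produced. The paper posits $u_{0,\varepsilon}=u_0+\xi_0\varepsilon^{-\alpha}$, $u_{1,\varepsilon}=u_1+\xi_1\varepsilon^{-\beta}$ and solves the power-counting system (\ref{1a})--(\ref{3c}) case by case in $(\nu_1,\nu_2,\chi)$, which yields explicit exponents and coefficients (Table \ref{ntab}) and hence the classification of the limit as a delta versus a singular shock in the sense of Definition \ref{minor2}. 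For $\chi=-1$ you instead enforce (A) exactly, set $u_{1,\varepsilon}=s$, $u_{0,\varepsilon}=\kappa/\varepsilon-s$, and get (B) by an intermediate-value argument producing exact cancellation $f_l(u_{0,\varepsilon})+f_r(u_{1,\varepsilon})=0$. This is a cleaner and more unified existence argument (no case splitting on exponents), at the price of losing the explicit asymptotics of the states and the delta/singular classification --- which the conclusion \eqref{konacno resenje} does not actually require.

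Two small repairs. First, your claim that $g_{\varepsilon}(s)=f_l(\kappa/\varepsilon-s)+f_r(s)$ ``runs from large positive to large negative'' uses $\nu_1=\nu_2=\nu>0$; when $\nu=0$ both fluxes are bounded at infinity and the sign change (hence the IVT root) may fail to exist. But in that case no cancellation is needed: $\varepsilon\bigl(f_l(u_{0,\varepsilon})+f_r(u_{1,\varepsilon})\bigr)\to 0$ automatically for any admissible choice, e.g.\ $u_{0,\varepsilon}=\kappa/\varepsilon$, $u_{1,\varepsilon}=u_1$, so the argument goes through once you treat $\nu<1$ as in your $\chi=1$ case. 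Second, your opening aside about ``the case $\kappa<0$'' is vacuous: under Assumption 1 one always has $\kappa=f_l(u_0)-f_r(u_1)>0$, as the paper notes after \eqref{part2}. Neither point affects the validity of the overall argument.
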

\begin{proof}
Substitute
\begin{equation}\label{sdw1}
u_{\varepsilon}(x,t)=\left\{
\begin{array}{l}
u_{0}\; ,\quad x<-\varepsilon t\\
u_{0,\varepsilon} \;,\quad -\varepsilon t<x< 0\\
u_{1,\varepsilon} \;,\quad 0<x<\varepsilon t\\
u_{1} \;, \quad x>\varepsilon t
\end{array}
\right.
\end{equation}
with
\begin{equation*}\label{components}
u_{0,\varepsilon}=u_{0}+\xi_{0}\varepsilon^{-\alpha}\;\text{and}\;
u_{1,\varepsilon}=u_{1}+\xi_{1}\varepsilon^{-\beta}\;
\end{equation*}
 and $\alpha,\beta\geq 0$, $0\leq \xi_{0},\xi_1<\infty$ in \eqref{mishrinaJednacina}.\\
From \eqref{sdw1} and \eqref{fluks}, using the standard Rankine-Hugoniot shock calculations we obtain
\begin{align*}\label{ut}
\langle \partial_{t}u_{\varepsilon},\varphi(x,t)\rangle\approx 
\varepsilon\int_{0}^{\infty}(u_{0,\varepsilon}-u_{0})\varphi(-\varepsilon t,t)dt
-\varepsilon\int_{0}^{\infty}(u_{1}-u_{1,\varepsilon})\varphi(\varepsilon t,t)dt.
\end{align*}
Using the Taylor expansion 
\begin{equation}\label{Taylor}
\varphi(\pm \varepsilon t,t)=\varphi(0,t)+\sum_{j=1}^{m}\partial_{x}^{j}\varphi(0,t)\dfrac{(\pm\varepsilon t)^j}{j!}+\mathcal{O}(\varepsilon^{m+1}),
\end{equation}
for $m=1$, we obtain 
\begin{eqnarray*}
\langle \partial_{t}u_{\varepsilon},\varphi(x,t)\rangle &\approx &
\int_{0}^{\infty}\left(-\varepsilon(u_{0}+u_{1})+\varepsilon(u_{0,\varepsilon}+u_{1,\varepsilon} \right)\varphi(0,t)dxdt\\
&&+\int_{0}^{\infty}\varepsilon^2\left(u_{1}-u_{0}+u_{0,\varepsilon}-u_{1,\varepsilon}\right)t\varphi_{x}(0,t)dxdt
\\
&\approx &\langle\left(-\varepsilon(u_{0}+u_{1})+\varepsilon(u_{0,\varepsilon}+u_{1,\varepsilon} \right))\delta(x),\varphi(x,t)\rangle\\
&&-\langle\varepsilon^2\left(u_{1}-u_{0}+u_{0,\varepsilon}-u_{1,\varepsilon}\right)t\delta^{\prime}(x),\varphi(x,t)\rangle.
\end{eqnarray*}
After neglecting all the terms tending to zero, we have
\begin{align}\label{part1}
\langle \partial_{t}u_{\varepsilon},\varphi(x,t)\rangle\approx &\left(\varepsilon^{1-\alpha}\xi_0+\varepsilon^{1-\beta}\xi_1\right)\langle
\delta(x),\varphi(x,t)\rangle
\\&-\left( \varepsilon^{2-\alpha}\xi_0-\varepsilon^{2-\beta}\xi_1\right)\langle
t\delta^{\prime  }(x),\varphi(x,t)\rangle.\nonumber
\end{align}
Next,
\begin{align*}
& \langle  \partial_{x}f(u_{\varepsilon}),\varphi(x,t)\rangle \approx 
\int_{0}^{\infty}(f_{l}(u_{0,\varepsilon})-f_{l}(u_{0}))
\varphi(-\varepsilon t,t)dt \\
+ & \int_{0}^{\infty}\left(f_{r}(u_{1,\varepsilon})-f_{l}
(u_{0,\varepsilon})\right)\varphi(0,t)dt+\int_{0}^{\infty}\left(f_{r}(u_{1})
-f_{r}(u_{1,\varepsilon})\right)\varphi(\epsilon t,t)dt.
\end{align*}

We have to use the Taylor expansion \eqref{Taylor} for $m=2$ now 
since there is a term with $\epsilon^{2}$ above. Using \eqref{deficit} 
we have 
\begin{align}\label{part2}
\langle  \partial_{x}f(u_{\varepsilon}),\varphi(x,t)\rangle & \approx &
-\int_{0}^{\infty}\kappa\varphi(0,t)dt-\varepsilon\int_{0}^{\infty}(f_{l}(u_{0,\varepsilon})+f_{r}(u_{1,\varepsilon}))t\varphi_{x}(0,t)dt\nonumber\\&&+\dfrac{\varepsilon^2}{2}\int_{0}^{\infty}(f_{l}(u_{0,\varepsilon})-f_{r}(u_{1,\varepsilon}))t^{2}\varphi_{xx}(0,t)dt
\nonumber\\
&\approx &\langle -\kappa\delta(x),\varphi(x,t)\rangle+\langle\varepsilon t \left(f_{l}(u_{0,\varepsilon})+f_{r}(u_{1,\varepsilon})\right)\delta^{\prime}(x),\varphi(x,t)\rangle
\nonumber\\&&-\langle \dfrac{\varepsilon^2 t^2}{2}  \left(f_{l}(u_{0,\varepsilon})-f_{r}(u_{1,\varepsilon})\right)\delta^{\prime\prime}(x),\varphi(x,t)\rangle.
\end{align}
Note that $\kappa>0$ due to Assumption 1.
Thus, from \eqref{mishrinaJednacina}, \eqref{part1} and \eqref{part2} 
\begin{align*}
& \langle \partial_{t}u_{\varepsilon}+\partial_{x}f(u_{\varepsilon}),\varphi(x,t)\rangle \approx 
\langle(-\kappa+(\xi_{0}\varepsilon^{1-\alpha}+\xi_{1}\varepsilon^{1-\beta}))\delta(x),\varphi(x,t)\rangle 
\\
&+\langle \left(\varepsilon(f_{l}(u_{0,\varepsilon})+f_{r}(u_{1,\varepsilon}))
-\left(\xi_0 \varepsilon^{2-\alpha}-\xi_1\varepsilon^{2-\beta}\right)\right)t\delta^{\prime}(x),\varphi(x,t)\rangle\\
&-\langle \dfrac{\varepsilon^2 t^2}{2}  \left(f_{l}(u_{0,\varepsilon})-f_{r}(u_{1,\varepsilon})\right)\delta^{\prime\prime}(x),\varphi(x,t)\rangle\approx 0.
\end{align*}

These relations are satisfied if the following holds
\begin{align}
\label{1}\lim_{\varepsilon\rightarrow 0}((\xi_{0}\varepsilon^{1-\alpha}+\xi_{1}\varepsilon^{1-\beta}))&=\kappa\\
\label{2}\lim_{\varepsilon\rightarrow 0}\left(\varepsilon(f_{l}(u_{0,\varepsilon})+f_{r}(u_{1,\varepsilon}))
-\left( \xi_0\varepsilon^{2-\alpha}-\xi_1\varepsilon^{2-\beta}\right)\right)&=0\\
\label{3}\lim_{\varepsilon\rightarrow 0}\varepsilon^2\left(f_{l}(u_{0,\varepsilon})-f_{r}(u_{1,\varepsilon})\right)&=0.
\end{align}
Let us consider relation \eqref{1} first. It is clear that $\alpha,\beta\leq 1$ since the left-hand side would be infinite otherwise (note that $\xi_0,\xi_1\geq0$). This implies that the second term on the left-hand side of relation \eqref{2} vanishes, i.e. relation \eqref{2} becomes
\begin{equation*}\label{2*}\lim_{\varepsilon\rightarrow 0}\varepsilon(f_{l}(u_{0,\varepsilon})+f_{r}(u_{1,\varepsilon}))=0
\end{equation*}
and system \eqref{1}-\eqref{3}
becomes
\begin{align*}
\lim_{\varepsilon\rightarrow 0}((\xi_{0}\varepsilon^{1-\alpha}+\xi_{1}\varepsilon^{1-\beta}))&=\kappa\\
\lim_{\varepsilon\rightarrow 0}\varepsilon(f_{l}(u_{0,\varepsilon})+f_{r}(u_{1,\varepsilon}))&=0\\
\lim_{\varepsilon\rightarrow 0}\varepsilon^2\left(f_{l}(u_{0,\varepsilon})-f_{r}(u_{1,\varepsilon})\right)&=0.
\end{align*}
  
It is clear that at least one of the parameters $\alpha$ or $\beta$ should 
be 1 since $\kappa>0$. 
Taking into account \eqref{let}, the above system becomes 
 \begin{align}
\label{1a}\xi_{0}\varepsilon^{1-\alpha}+\xi_{1}\varepsilon^{1-\beta}&\approx\kappa\\
\label{2b}c_{1}\xi_{0}^{\nu_{1}}\varepsilon^{1-\nu_{1}\alpha}+\chi c_{2}\xi_{1}^{\nu_{2}}\varepsilon^{1-\nu_{2}\beta}&\approx 0\\
\label{3c}c_{1}\xi_{0}^{\nu_{1}}\varepsilon^{2-\nu_{1}\alpha}-\chi c_{2}\xi_{1}^{\nu_{2}}\varepsilon^{2-\nu_{2}\beta}&\approx 0,
\end{align}
where $c_1,c_{2}>0$ are constants depending on the behavior of $f_l$ at infinity. 
The fluxes $f_l$ and $f_r$ satisfy \eqref{subquadratic} and $\alpha=1$ or 
$\beta=1$, so one can easily check that the system (\ref{1a} -- \ref{3c}) 
is consistent.

\noindent\textbf{Case 1.} Let us first consider the case $\chi =1$. 
System (\ref{1a}--\ref{3c}) is then 
\begin{align*}
\xi_{0}\varepsilon^{1-\alpha}+\xi_{1}\varepsilon^{1-\beta}&\approx\kappa\\
c_{1}\xi_{0}^{\nu_{1}}\varepsilon^{1-\nu_{1}\alpha}+c_{2}\xi_{1}^{\nu_{2}}\varepsilon^{1-\nu_{2}\beta}&\approx 0\\
c_{1}\xi_{0}^{\nu_{1}}\varepsilon^{2-\nu_{1}\alpha}-c_{2}\xi_{1}^{\nu_{2}}\varepsilon^{2-\nu_{2}\beta}&\approx 0.
\end{align*}
%
{If both $\nu_1<1$ and $\nu_2<1$, then the above conditions are satisfied for $\alpha=\beta=1$ and $\xi_0+\xi_1=\kappa$. On the other hand, if $\nu_1<1$ and $\nu_2\geq 1$, then the conditions are satisfied for $\alpha=1$, $\beta=0$, $\xi_0=\kappa$ and $\xi_1=0$. Similarly, if $\nu_1\geq 1$ and $\nu_2<1$, then (\ref{1a}--\ref{3c}) holds for $\alpha=0$, $\beta=1$, $\xi_0=0$ and $\xi_1=\kappa$. }\\

\par\noindent \textbf{Case 2.} Let us now suppose that $\chi=-1$. System (\ref{1a}--\ref{3c}) then becomes
\begin{align*}
\xi_{0}\varepsilon^{1-\alpha}+\xi_{1}\varepsilon^{1-\beta}&\approx\kappa\\
c_{1}\xi_{0}^{\nu_{1}}\varepsilon^{1-\nu_{1}\alpha}-c_{2}\xi_{1}^{\nu_{2}}\varepsilon^{1-\nu_{2}\beta}&\approx 0\\
c_{1}\xi_{0}^{\nu_{1}}\varepsilon^{2-\nu_{1}\alpha}+c_{2}\xi_{1}^{\nu_{2}}\varepsilon^{2-\nu_{2}\beta}&\approx 0.
\end{align*}

If $\nu_1<1$ and $\nu_2<1$, $\nu_1<1$ and $\nu_2\geq 1$ or $\nu_1\geq 1$ and $\nu_2<1$ we have the same results under the same conditions in as in Case 1. \\
If $\nu_1=1$ and $\nu_2\geq 1$, one can easily see that system (\ref{1a}--\ref{3c}) is satisfied provided $\alpha=1$, $\beta=\dfrac{1}{\nu_2}$, $\xi_0=\kappa$ and $\xi_1=\left( \dfrac{c_1}{c_2}\kappa\right)^\frac{1}{\nu_2}$, whereas for $\nu_1\geq 1$ and $\nu_2=1$ we obtain $\alpha=\dfrac{1}{\nu_1}$, $\beta=1$, $\xi_0=\left( \dfrac{c_2}{c_1}\kappa\right)^\frac{1}{\nu_1}$ and $\xi_1=\kappa$.\\
If $\nu_1=\nu_2\in[1,2)$, for $\alpha=\beta=1$, one gets $\xi_0,\xi_1$ from the system
\begin{align*}
\xi_{0}+\xi_{1}&=\kappa\\
c_{1}\xi_{0}^{\nu_{1}}-c_{2}\xi_{1}^{\nu_{1}}&=0,
\end{align*}
i.e., 
\[\xi_0= \dfrac{\kappa\left(\ \dfrac{c_2}{c_1} \right)^\frac{1}{\nu_1}}{1+\left(\dfrac{c_2}{c_1}\right)^\frac{1}{\nu_1}} \quad , \quad\xi_1=\dfrac{\kappa}{1+\left(\dfrac{c_2}{c_1}\right)^\frac{1}{\nu_1}}.\] \\
On the other hand, if  $\nu_1\neq\nu_2\in [1,2)$, we have two cases to consider: $\nu_1<\nu_2$ and $\nu_1>\nu_2$. Let $\nu_1<\nu_2$. Then system (\ref{1a}--\ref{3c}) is satisfied for $\alpha=1$, $\beta=\dfrac{\nu_1}{\nu_2}$, $\xi_0=\kappa$ and $\xi_1=\bigg(\dfrac{c_1}{c_2}  \kappa^{\nu_{1}} \bigg)^\frac{1}{\nu_2}.$ Similarly, if $\nu_1>\nu_2$ one gets that the conditions are satisfied for $\alpha=\dfrac{\nu_2}{\nu_1}$, $\beta=1$, $\xi_0=\bigg( \dfrac{c_2}{c_1}\kappa^{\nu_2}\bigg)^\frac{1}{\nu_1}$ and $\xi_1=\kappa$.$\quad\quad\quad\quad\quad\quad\square$
\end{proof}

The limits of obtained shadow waves as $\varepsilon \rightarrow 0$ 
are given in Table \ref{ntab} according to Definition \ref{minor2}.



\begin{table}[]
\begin{tabular}{@{}p{25mm}p{60mm}p{30mm}@{}}
\toprule
$\mathbf{\nu_1, \nu_2}$&\textbf{Solution of system (\ref{1a}--\ref{3c})}&\textbf{Shadow wave when $\mathbf{\varepsilon\rightarrow 0}$}\\\midrule
\multicolumn{3}{c}{\textbf{Case 1:} $\mathbf{\chi=1}$}    \\ \midrule
$\nu_1<1, \nu_2<1$        & $\alpha=\beta=1\;$, $\;\xi_1+\xi_2=\kappa$        & delta shock wave      \\
$\nu_1<1, \nu_2\geq 1$        & $\alpha=1\;, \;\beta=0,\; \xi_0=\kappa, \;\xi_1=0$        & delta shock wave         \\
$\nu_1\geq 1, \nu_2<1$       & $\alpha=0\;,\; \beta=1, \;\xi_0=0, \;\xi_1=\kappa$       & delta shock wave         \\ \midrule
\multicolumn{3}{c}{\textbf{Case 2: $\mathbf{\chi=-1}$}} \\ \midrule
$\nu_1<1, \nu_2<1$        & $\alpha=\beta=1\;$, $\;\xi_1+\xi_2=\kappa$        & delta shock wave      \\
$\nu_1<1, \nu_2\geq 1$        & $\alpha=1\;, \;\beta=0\;,\; \xi_0=\kappa\;,\; \xi_1=0$        & delta shock wave         \\
$\nu_1\geq 1, \nu_2<1$       & $\alpha=0\;,\; \beta=1\;, \;\xi_0=0\;,\; \xi_1=\kappa$       & delta shock wave         \\ 
$\nu_1=1, \nu_2\geq 1$        & $\alpha=1, \beta=\dfrac{1}{\nu_2}, \xi_0=\kappa, \xi_1=\bigg( \dfrac{c_1}{c_2}\kappa\bigg)^\frac{1}{\nu_2}$        & singular shock wave      \\
$\nu_1\geq 1, \nu_2=1$        & $\alpha=\dfrac{1}{\nu_1}, \beta=1, \xi_0=\bigg( \dfrac{c_2}{c_1}\kappa\bigg)^\frac{1}{\nu_1}, \xi_1=\kappa$        & singular shock wave       \\
$\nu_1=\nu_2\in[1,2)$        &  $\alpha=\beta=1\;,$ \newline $\xi_0=\dfrac{\kappa \bigg(\dfrac{c_2}{c_1}\bigg)^\frac{1}{\nu_1}}{\bigg(1+\dfrac{c_2}{c_1}\bigg)^\frac{1}{\nu_1}}\; , \; \xi_1= \dfrac{\kappa}{\bigg(1+\dfrac{c_2}{c_1}\bigg)^\frac{1}{\nu_1}}$        & delta shock wave       \\
$\nu_1,\nu_2\in[1,2),$\newline $\nu_1<\nu_2$        & $\alpha=1\;$, $\;\beta=\dfrac{\nu_1}{\nu_2}\;$, \newline$\xi_0=\kappa\;,$ $\;\xi_1=\bigg(\dfrac{c_1}{c_2}\kappa^{\nu_1}\bigg)^{\frac{1}{\nu_2}}$     & singular shock wave        \\
$\nu_1,\nu_2\in[1,2),$\newline$ \nu_1>\nu_2$         & $\alpha=\dfrac{\nu_2}{\nu_1}\;,\;\beta=1\;,$\newline
$\xi_0=\bigg(\dfrac{c_2}{c_1}\kappa^{\nu_2}\bigg)^{\frac{1}{\nu_1}}\;,\;\xi_1=\kappa$        & singular shock wave       \\ \bottomrule
\end{tabular}
\caption{The limits of the shadow wave solution in various cases when $\varepsilon\rightarrow 0$}
\label{ntab}
\end{table}

\begin{ex} \label{pp1}
Let
\begin{equation}\label{konkretno}
f_l(u)=\dfrac{1+2u^{2}}{1+u^2}\;, \; f_r(u)=-\dfrac{1+2u^{2}}{1+u^2},
\end{equation}
and $u_0(x)=1$.

One can easyly check that $f_l^{\prime}(1)> 0> f_r^{\prime}(1)$, i.e. that the overcompressibility condition is satisfied. Using that
$\kappa=f_l(1)-f_r(1)=3$, $\chi=-1$, $\nu_1<1$ and $\nu_2<1$ 
and Theorem \ref{lema1}, 
we obtain a solution in the form of a stationary shadow wave 
(i.e. delta shock wave, see Table \ref{ntab}) with the strength equal 
to $k_\varepsilon(t)=3t$, 
\begin{equation*}
u(x,t)\approx 1+3t\delta(x).
\end{equation*}

\end{ex}

\begin{ex}
\label{pp2}
Let 
\[f_l(u)=\sqrt{u^2+1}+1\;,\; f_r(u)=\dfrac{1}{\sqrt{u^2+1}},\]
and $u_0(x)=1.$

In this case we have $\nu_1\geq 1, \nu_2<1$ and $\chi=1$. Thus we obtain a delta shock wave of the form
\begin{equation*}
u(x,t)\approx 1+\dfrac{2+\sqrt{2}}{2}t\delta(x).
\end{equation*}
\end{ex}

\subsection{Solutions to Riemann problem}\label{riman}

Let us now consider problem \eqref{mishrinaJednacina} with a
general Riemann initial data
\begin{equation*}
\label{kxOpsti1}
u(x,0)=\left\{\begin{array}{ll}
	u_{l},& x<0\\
	u_{r},& x>0
	\end{array}\right..
\end{equation*}

There are the four possible positions of the initial data states $u_l,\;u_r$ with respect to $\theta_l,\;\theta_r$: 

\noindent
i) $u_l\geq \theta_l,\;u_r\geq\theta_r$

\noindent
ii)]$u_l\geq \theta_l,\;u_r<\theta_r$

\noindent
iii) $u_l<\theta_l,\;u_r\geq\theta_r$

\noindent
iv) $u_l<\theta_l,\;u_r<\theta_r$ 

\noindent
due to Assumption 1. One can see the illustration at Fig. \ref{ilustracije}.
\begin{figure}[h]
\begin{center}
\begin{tabular}[ht]{@{}cc@{}}
\small{i) $u_l\geq \theta_l,\;u_r\geq\theta_r$}& \small{ii) $u_l\geq \theta_l,\;u_r<\theta_r$ }\\
\includegraphics[height=40mm]{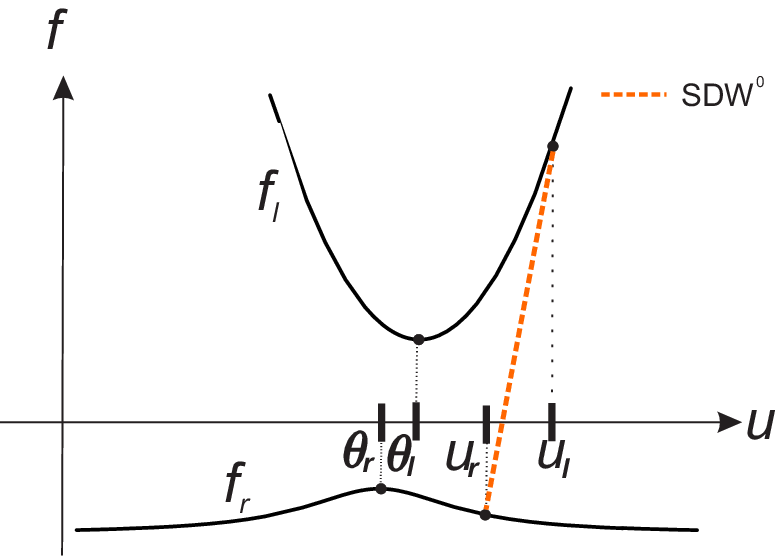}& \includegraphics[height=40mm]{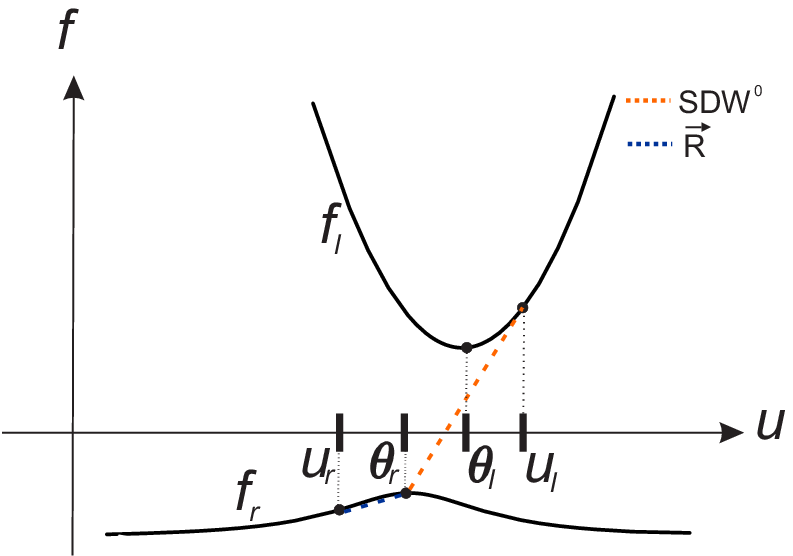}\\
\small{iii) $u_l<\theta_l,\;u_r\geq\theta_r$}& \small{iv) $u_l<\theta_l,\;u_r<\theta_r$ }\\
\includegraphics[height=40mm]{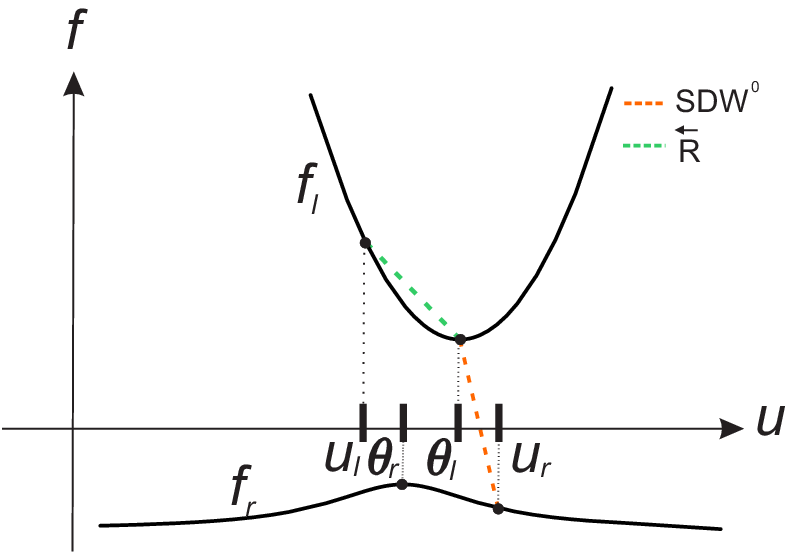} & \includegraphics[height=40mm]{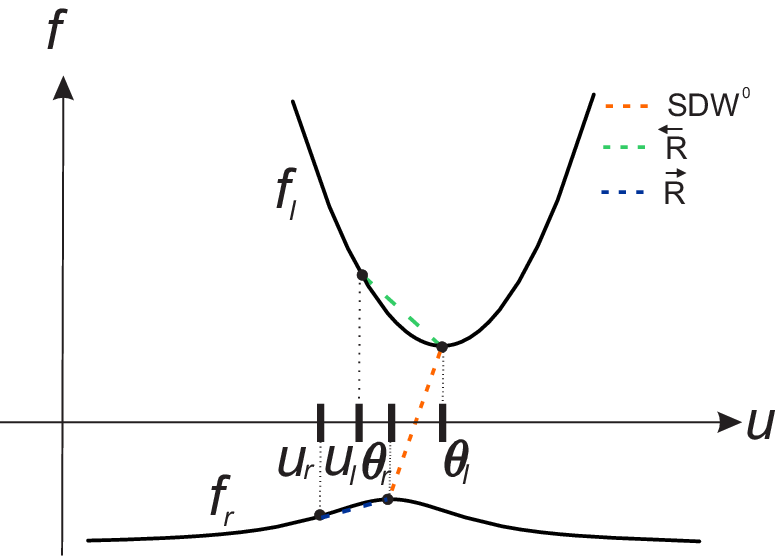}
\end{tabular}
\end{center}
\caption{Various positions of initial data states $u_l,u_r$ with respect to $\theta_l,\;\theta_r$}
\label{ilustracije}
\end{figure}

In case i), the overcompressibility condition \eqref{over} is satisfied and 
there exists a stationary shadow wave at $x=0$ with the sates $u_l,u_r$. We 
will denote it by $SDW^0(u_l,u_r)$. 
Note that in this case the shadow wave is uniquely determined. Namely, if 
there would exist a state $\hat{u}_r>\theta_r$ and $\hat{u}_r\neq u_r$ that 
determines a stationary shadow wave at $x=0$, i.e. $SDW^0(u_l,\hat{u}_r)$, 
then the pair $(\hat{u}_r,u_r)$ would determine a wave of the right-hand side 
of $x=0$ propagating to left since $f_r^{\prime}(\hat{u}), 
f_r^{\prime}(\hat{u_r})\leq 0$, but that is impossible. 

In case ii), we have $f_l^{\prime}(u_l)>0$, $f_r^{\prime}(u_r)>0$, i.e. the 
overcompressibility condition is violated. Thus, a state $u_r$ has to be 
connected with a state $u_1$, such that $f_r^{\prime}(u_1)\leq 0$, i.e. 
$u_1\geq \theta_r$.
Then the pair $u_l,u_1$ satisfies the overcompressibility condition and 
Theorem \ref{lema1} implies the existence of a $SDW^0(u_l,u_1)$. The states 
$u_l$ and $u_1$ are then connected with a forward wave that is in fact a 
rarefaction wave since $u_1>u_r$ and $f_r$ is of concave type.  
The unique choice of the state $u_1$ is $u_1=\theta_r$ since for any 
$u_1>\theta_r$ we would have 
$f_{r}^{\prime}(u_r)>0$,$f_{r}^{\prime}(u_1)<0$,  
but that is not allowed, too. 
Thus, a unique solution is of the form 
$SDW^0(u_l,\theta_r)+\overrightarrow{R}(\theta_r,u_{r})$
in this case. Here $\overrightarrow{R}(u_{1},u_{r})$ 
denotes the forward rarefaction wave and ``+" means ``followed by".

Similarly, the only proper choice is $u_0=\theta_l$ in case iii). It is 
connected to $u_l$ in such a way that the pair 
$\left(u_l,u_0\right)$ satisfies the 
overcompressibility condition. Also, there is no interaction of the backward 
rarefaction  wave $\overleftarrow{R}(u_{l},u_{0})$ with the $SDW^0(u_0,u_r)$.

Case iv) is a combination of the previous cases. A unique solution is a wave 
combination  $\overleftarrow{R}(u_l,\theta_l)+SDW^0(\theta_l,\theta_r)
+\overrightarrow{R}(\theta_r,u_r)$.
\medskip

Table \ref{resenja} shows solutions of the Riemann problem for \eqref{mishrinaJednacina}, \eqref{kxOpsti} in cases i)-iv).

\begin{table}[h]
\centering
\begin{tabular}{@{}cll@{}}
\toprule
 Subcase&Position of $u_l,u_r$& Solution \\
 \midrule
\rule{0pt}{2ex}i)&$u_l\geq\theta_l$ and $u_r\geq\theta_r$& $
SDW^0(u_l,u_r)$  \\
\rule{0pt}{4ex}ii)&$u_l\geq\theta_l$ and $u_r<\theta_r$ &$
SDW^0(u_l,\theta_r)+\overrightarrow{R}(\theta_r,u_r)$ \\
\rule{0pt}{4ex}iii)& $u_l<\theta_l$ and $u_r\geq\theta_r$& $
\overleftarrow{R}(u_l,\theta_l)+SDW^0(\theta_l,u_r)$ \\
\rule{0pt}{4ex}iv)&$u_l<\theta_l$ and $u_r<\theta_r$ &$
\overleftarrow{R}(u_l,\theta_l)+SDW^0(\theta_l,\theta_r)+\overrightarrow{R}(\theta_r,u_r)$\\ \bottomrule
\end{tabular}
\caption{Solutions of problem \eqref{mishrinaJednacina}, \eqref{kxOpsti}}
\label{resenja}
\end{table}

\subsection{Remark on the linear case}\label{prosirenje}
Suppose that both fluxes are affine.  The above method with a centered 
shadow wave can also be applied. As we mentioned in the introduction,  
when both fluxes are linear and 
$f_l^{\prime}>0>f_r^{\prime}$, there exists no classical solution for 
arbitrary initial values $u_l,\;u_r$ since any pair $(u^-,u^+)$ ($u^-$ and 
$u^+$ are given by \eqref{uPlus}) which satisfies the Rankine-Hugoniot jump 
condition at $x=0$ determines a wave connecting $u_l$ and $u^-$ at the 
left-hand side of the origin traveling to right, and a right-hand side wave 
connecting $u_r$ and $u^+$ traveling to left that are not allowed (see 
illustration in Fig. \ref{nova}).
\begin{figure}[h]
\begin{center}
\includegraphics[width=0.4\textwidth]{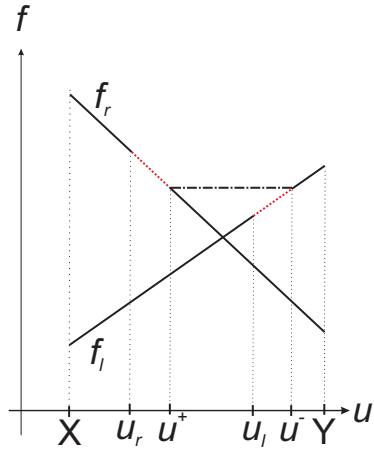}
\end{center}
\caption{The overcompressive linear flux case}
\label{nova}
\end{figure}

Put $f_l(u)=p_{l}u+q_{l}$, $f_r(u)=p_{r}u+q_{r}$ in Eq.
\eqref{mishrinaJednacina} where
$p_l\geq0,\;p_r\leq0$ and use the initial data \eqref{kx}. 
It is clear that
\begin{equation*}
f_l^{\prime}(u_0)> 0>f_r^{\prime}(u_1)
\end{equation*}
for every choice of the states $u_0,u_1$. 
That fits the case $\chi=1$, $\nu_1=\nu_2\in [1,2)$ 
(see Table \ref{ntab}), i.e. $\alpha =1$, $\;\beta=1$, 
$\xi_0=\dfrac{\kappa\left(\frac{c2}{c1}\right)}{1+\frac{c2}{c1}}$ and 
$\xi_1=\dfrac{\kappa}{1+\left(\frac{c2}{c1}\right)}.$

On the other hand, the Rankine-Huginot deficit equals
\begin{equation*}
   \kappa=f_l(u_0)-f_r(u_1)=p_lu_0+q_l-p_ru_1-q_r.
\end{equation*}
According to Theorem \ref{lema1}, the problem has a solution in the form of a $SDW^0(u_0,u_1)$ tending to a delta shock wave as $\varepsilon \rightarrow 0$, i.e.
\begin{equation}\label{deltaLin}
u(x,t)\approx U+\left(p_lu_0-p_ru_1+q_l-q_r\right)t\delta(x),
\end{equation}
where we denoted \eqref{kx} with $U(x)$, i.e.
\begin{equation*}\label{U}
U(x)=\left\{\begin{array}{ll}
	u_{0},& x<0\\
	u_{1},& x>0
	\end{array}\right..
\end{equation*}

\begin{ex}
Let us consider Eq. \eqref{mishrinaJednacina} where the fluxes are given by
\begin{equation}\label{line}
f_l=u\quad\text{and}\quad f_r=-u
\end{equation}
together with the constant initial data $u(x,0)=1.$
\label{primer2}
\end{ex}
\par According to \eqref{deltaLin}, problem   
\eqref{mishrinaJednacina}, \eqref{line} has a solution in the form $SDW^0(1,1)$ with the strength $\kappa t=(f_l(1)-f_r(1))t=2t$. As we can see from Table \ref{ntab}, the obtained shadow wave is a delta shock wave.\\
The case with linear fluxes is convenient for using the delta split solution concept \cite{m15,m18}, so one could derive a delta shock wave solution on an alternative way and compare it with the solution obtained using shadow waves.

\subsection{A numerical example}\label{numerika111}

We will use the well known Godunov method for the numerical example bellow. Take		
\begin{align}\label{nonlin}
v_{t}+F(v)_x=0,
\end{align}
with
$v=(v^1,v^2)$ being a two component vector with components 
$v^i:\mathbb{R}\times\mathbb{R}_+\rightarrow \mathbb{R},\;i=1,2$ and 
$F:\mathbb{R}^2\rightarrow\mathbb{R}^2$,
be a nonlinear system of conservation laws together with the initial data
\begin{equation*}
v(x,0)=\left\{\begin{array}{ll}\label{vkx}
	v_l,& x<0\\
	v_r,& x>0
	\end{array}\right.,
\end{equation*}
where $v_l=(v_{l}^1,v_{l}^2)$ and $v_r=(v_{r}^1,v_{r}^2)$.
Solving system \eqref{nonlin} numerically starts by discretizing the $x-t$ 
plane using a uniform grid $\Delta x\mathbb{Z}\times\Delta 
t\mathbb{N},\;\Delta x>0,\Delta t>0$ with grid points labeled 
$(x_j,t_n):=(j\Delta x,n\Delta t)$, the discrete unknowns $u_j^n=u(x_j,t_n)$, 
and using a suitable numerical scheme subsequently. Herein, we use Godunov's 
method coupled with Roe's linearization (see \cite{LeVeque}). The basic 
principle of the method can be described as follows.
First, system \eqref{nonlin} is linearized by approximating system \eqref{nonlin} locally at each cell interface
\begin{equation}\label{ro}
v_t+\widehat{A}_{j-1/2}v_x=0,
\end{equation}
where $\widehat{A}_{j-1/2}$ is a $2\times 2$ matrix that
approximate $F^{\prime}(v)$ in the neighborhood of $v_{j-1}$ and $v_j$ 
having the following properties:
\begin{align}
\widehat{A}_{j-1/2}(v_{j-1},v_j)(v_{j}-v_{j-1})=F(v_j)-F(v_{j-1})&\label{r1}\\
\widehat{A}_{j-1/2}(v_{j-1},v_{j})\quad\text{ has real distinct eigenvalues}&\label{r2}\\
\widehat{A}_{j-1/2}(v_{j-1},v_j)\rightarrow F^{\prime}(\bar{v})\quad\text{when}\quad v_{j-1},v_{j}\rightarrow \bar{v}.&\label{r3}
\end{align}
Note that condition \eqref{r2} implies that $\widehat{A}_{j-1/2}$ 
is diagonizable, and it can be decomposed
\begin{equation*}
\widehat{A}_{j-1/2}=R_{j-1/2}\Lambda_{j-1/2} R_{j-1/2}^{-1},
\end{equation*}
with $\Lambda_{j-1/2} =\mathop{\rm diag}
(\lambda_{j-1/2}^{1},\lambda_{j-1/2}^{2})$ 
being a diagonal matrix of eigenvalues and $R_{j-1/2}=[r_{j-1/2}^{1}\mid 
r_{j-1/2}^{2}]$ being the matrix of appropriate eigenvectors. Let us
introduce the following notation 
\begin{equation*}
\begin{split}
\lambda _{j-1/2,i}^{+}=& \max (\lambda _{j-1/2,i},0),\; 
\Lambda_{j-1/2} ^{+}=\mathop{\rm diag}\left(
\lambda _{j-1/2,1}^{+},\lambda _{j-1/2,2}^{+}\right), \\
\lambda _{j-1/2,i}^{-}=& \min (\lambda _{j-1/2}^{i},0),\; 
\Lambda_{j-1/2} ^{-}=\mathop{\rm diag}\left(
\lambda _{j-1/2,1}^{-},\lambda _{j-1/2,2}^{-}\right), \\
\widehat{A}_{j-1/2}^{+} =& R_{j-1/2}\Lambda_{j-1/2} ^{+}R_{j-1/2}^{-1},\; 
\widehat{A}_{j-1/2}^{-}=R_{j-1/2}\Lambda_{j-1/2} ^{-}R_{j-1/2}^{-1},\;
i=1,2.
\end{split}
\end{equation*}
Now, for the linearized system (\ref{ro}) Godunov method takes the form 
\begin{equation}
v_{j}^{n+1}=v_{j}^{n}-\frac{\Delta t_{n}}{\Delta x_{j}}
\left[ \widehat{A}_{j+1/2}^{-}(v_{j+1}^{n}-v_{j}^{n})
+\widehat{A}_{j-1/2}^{+}(v_{j}^{n}-v_{j-1}^{n})\right] .
\label{godunov}
\end{equation}
\bigskip

Let us verify the results obtained in  Example \ref{pp1} numerically. 
Suppose that \eqref{mishrinaJednacina} is the 
nonlinear system of conservation laws 
\begin{align}\label{sistemH}
u_t+(hf_r(u)+(1-h)f_l(u))_x&=0\\
h_t&=0,\nonumber
\end{align}
with the initial data
\begin{equation*}
u(x,0)=1,\quad h(x,0)=\left\{\begin{array}{ll}
	0,& x<0\\
	1,& x>0
	\end{array}\right..
\end{equation*}
The functions $f_l,f_r$ are given by \eqref{konkretno}. Using the Roe linearization, system 
\eqref{sistemH} is approximated 
by the linear system \eqref{ro} at every cell interface where
\begin{equation*}
\widehat{A}_{j-1/2}=\left[\begin{array}{ll}
\gamma_{j-1/2}&\rho_{j-1/2}\\
&\\
 0& 0
\end{array}
\right],
\end{equation*}
and
\begin{equation*}
\gamma_{j-1/2}=\dfrac{(1-2h_{j-1})(u_{j-1}+u_j)}{(1+u_{j-1}^2)(1+u_{j}^2)}\quad\text{ and}\quad \rho_{j-1/2}=\dfrac{-2(1+2u_{j}^2)}{(1+u_{j}^2)}.
\end{equation*}
The eigenvalues of $\widehat{A}_{j-1/2}$ are 
$\lambda_{j-1/2,1}=\gamma_{j-1/2}$ and $\lambda_{j-1/2,2}=0$. 
One can easily check that conditions \eqref{r1} and \eqref{r3} are satisfied. 
Condition \eqref{r2} also holds as $\gamma_{j-1/2}\neq 0$ for all $j$ since 
$u_l+u_r=2>0$ and the expected solution is a shadow wave above the line $u=1$. 
This implies $u_{j-1}+u_j\neq 0$ for all $j$. \\ Now, we use Godunov's wave 
propagation method for linear systems of hyperbolic conservation laws 
\eqref{godunov} with a mesh size $\Delta x=0.01$ and obtain results shown in 
Fig.\ref{numerika}. 

\begin{figure}[ht]
\begin{center}
\begin{tabular}[ht]{@{}cc@{}}
\includegraphics[height=35mm]{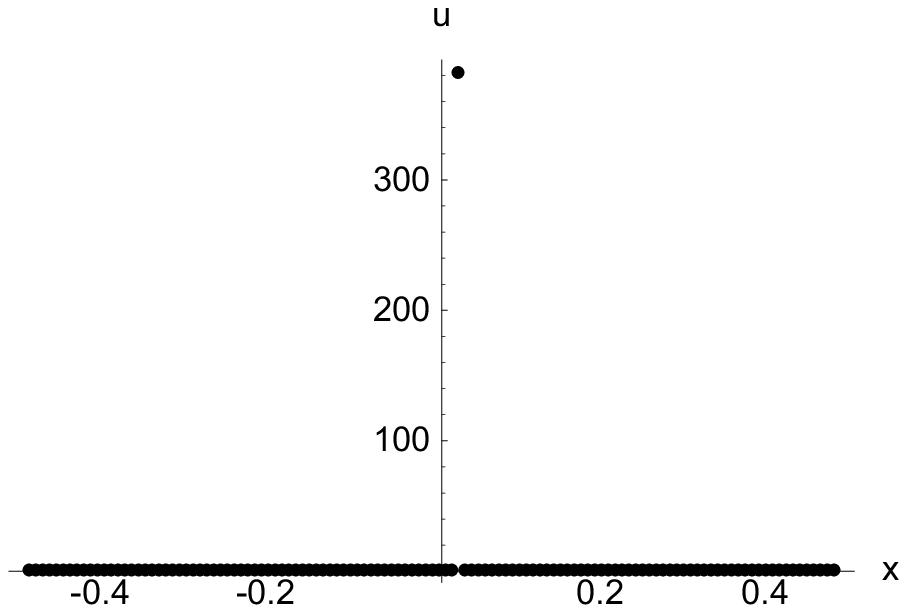} & \includegraphics[height=35mm]{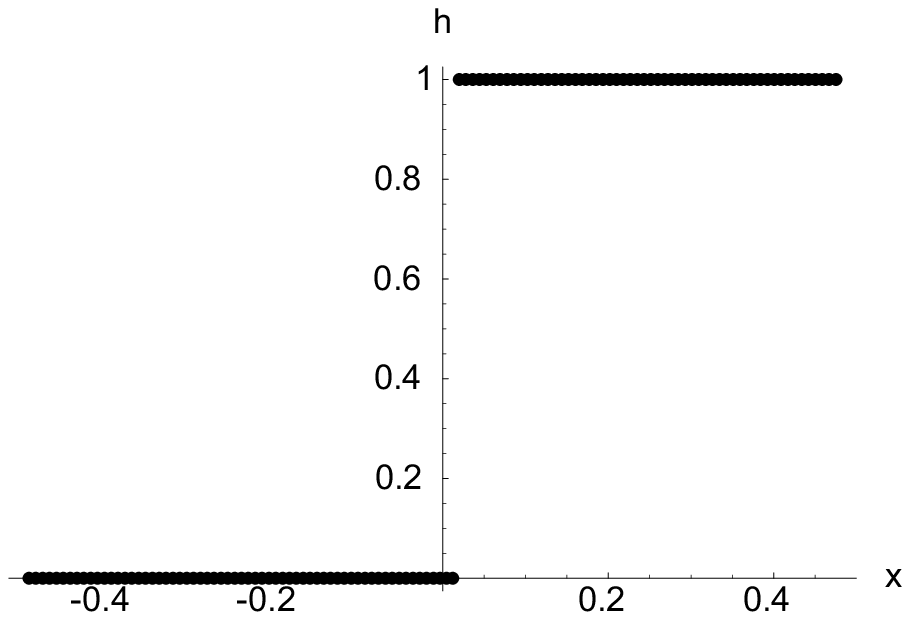} \\
\small{A) $u(x) \;\text{at}\; t=1$}& \small{B) $h(x)\;\text{ at}\; t=1$ }\\
\includegraphics[height=35mm]{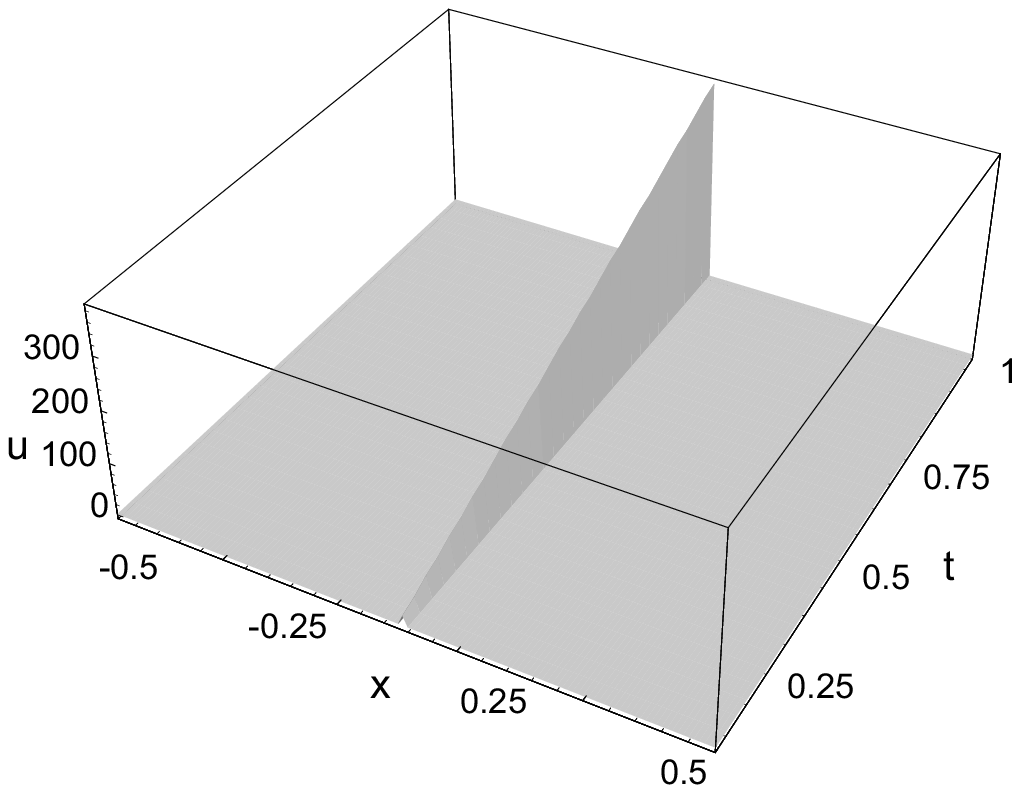} & \includegraphics[height=35mm]{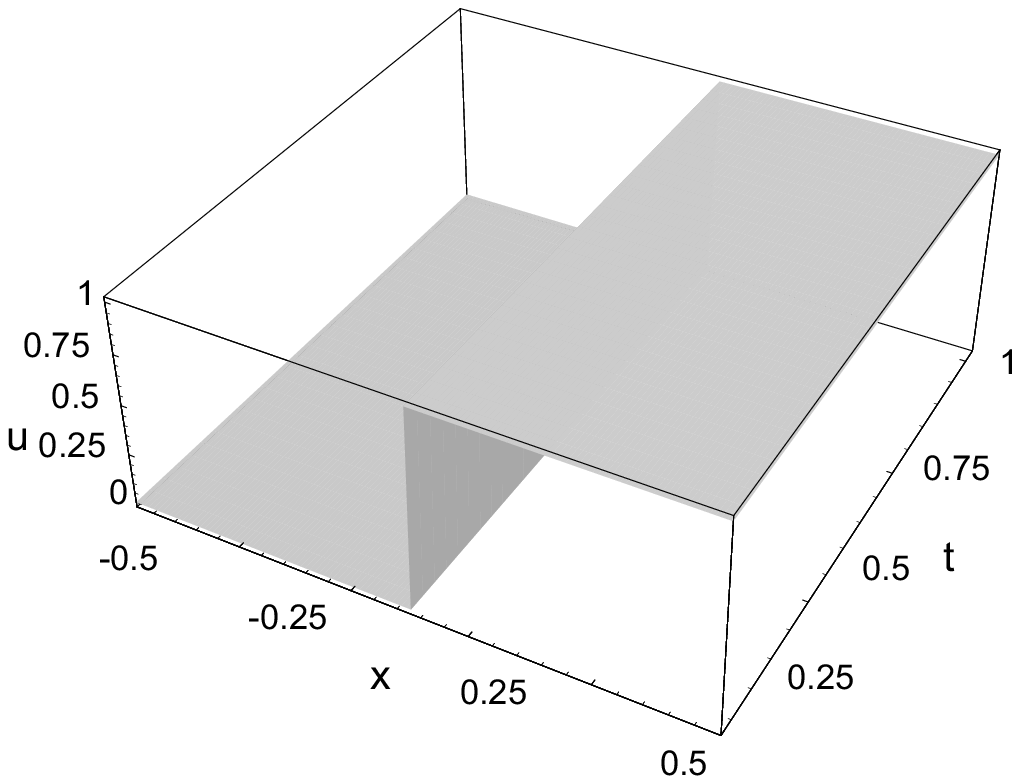} \\
\small{C) $u(x)\;\text{ for}\; t\in[0,1]$}& \small{D) $h(x) \;\text{for}\; t\in[0,1]$ }\\
\end{tabular}
\end{center}
\caption{Numerical results for the problem in Ex. \ref{pp1} }
\label{numerika}
\end{figure}

Note that the unknown $h$ in system \eqref{sistemH} is just an auxiliary variable, presenting in fact the discrete Heaviside function. 
As expected, $h$ does not change during time. On the other hand, the numerical 
approximation of $u$ is unbounded. In order to verify if it really presents a 
delta shock wave, we perform a test we first introduced in \cite{natasa}. 
Denote the singular part of the expected delta shock wave \eqref{konacno resenje} with $u_s$, i.e.\ $u_s=\kappa t$,
where $\kappa$ is given by \eqref{deficit}. Next, denote 
the surface under a singular part of the solution with
\begin{equation*}
P(t)=\int u_s dx=\kappa t\int \delta(x) dx\approx\kappa t=3t,
\end{equation*}
since it is well known that 
$\int \delta(x)dx\approx 1$,
and  as seen in Example \ref{pp1}, in our case $\kappa=3$.
In our numerical test, we approximate $P(t)$ using the left Riemann sum, 
denoted with $P_l(t)$. As one can see in Fig. \ref{surface}, the true 
(dashed line) and calculated (full line) surface under the singular part of 
the delta shock wave are very close over the interval $[0,1]$, so we conclude 
that the obtained unbounded numerical result is actually the expected 
$SDW^0(1,1)$. 
\begin{figure}[ht]
\begin{center}
\begin{tabular}[ht]{@{}c@{}}

\includegraphics[height=45mm]{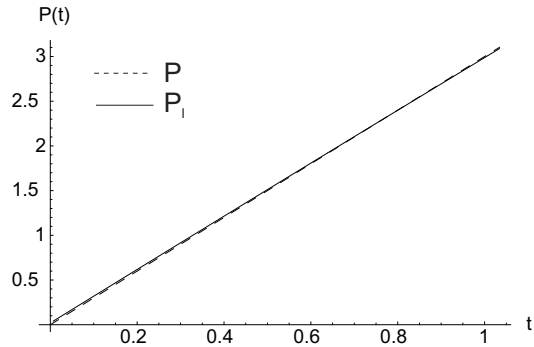}

\end{tabular}
\end{center}
\caption{Delta shock wave verification test for the solution in Ex. \ref{pp1}}
\label{surface}
\end{figure}


Tanja  Kruni\'{c} \\ The Higher Education Technical School of Professional Studies
Novi Sad,  Serbia \\ email: krunic@vtsns.edu.rs
\bigskip 
 
Marko Nedeljkov \\
Department of Mathematics and Informatics, Faculty of Sciences, University od Novi Sad, Serbia \\ email: marko@dmi.uns.ac.rs 


\begin{thebibliography}{0}

\bibitem{MishraRad} A. Adimurthi, S. Mishra, G. D. V. Gowda, 
Optimal entropy solutions for conservation laws with discontinuous 
flux-functions,  {\it J. Hyperbolic Differ. Equ.} {\bf 2} (2005) no.4 783--837.

\bibitem{general} Adimurthi, S. Mishra, G.D. Veerappa Gowda, 
Conservation law with the flux function discontinuous in the space variable -- II, Convex-concave type fluxes and generalized entropy solutions, {\it J. Comput. Appl. Math.} {\bf 203} (2007) 310--344.

\bibitem{AC} B. Andreianov, C. Canc\`es, C. On interface transmission 
conditions for conservation laws with discontinuous flux of general shape, 
{\it Journal of hyperbolic differential equations} 12 (2015), 343--384. 


\bibitem{clari2} R. B\"{u}rger, K. H. Karlsen, N. H. Risebro, J. D. Towers, 
Well-posedness in $BV_{t}$ and convergence of a difference scheme for continuous sedimentation in ideal clarifier-thickener units, {\it Numer. Math.} (2004) 97: 25--65.




\bibitem{Karlsen} K. H. Karlsen, N. H. Risebro, J. D., 
Upwind difference approximations for degenerate parabolic 
convection-diffusion equations with a discontinuous coefficient, {\it IMA J. Numer. Anal.,} {\bf 22} (2002), no.4, 623--664.


\bibitem{natasa}Krejic N., Krunic T., Nedeljkov M., \textit{Numerical verification of delta shock waves for presureless gas dynamics}, Journal of Mathematical Analysis and Applications, vol 345, no 1, 243--257,  (2008)

\bibitem{nase} Kruni\'{c} T., Nedeljkov M., \textit{Discrete shock profiles for scalar conservation laws with discontinuous fluxes}, Journal of Mathematical Analysis and Applications, vol 435, Issue 1, 986--1010,  (2016)

%
\bibitem{LeVeque} R. J. LeVeque, {\em Finite volume methods for hyperbolic problems,} Cambridge Texts in Applied Mathematics, Cambridge, United Kingdom, (2004) 317--323. 



\bibitem{signCh} S. Mishra,
{\em Convergence of upwind finite difference schemes for a scalar conservation law with indefinite discontinuities in the flux function,} 
{\it SIAM J. Numer. Anal.,} {\bf vol 43} (2005), no.2, 559--577.


\bibitem{Marko} M. Nedeljkov,
{\em Shadow waves: Entropies and interactions for delta and singular shocks,} 
{\it Arch. Rational Mech. Anal.,} {\bf 197} (2010), 489--537.


\bibitem{m15} M. Nedeljkov, 
Unbounded solutions to some systems of conservation laws - split delta shock waves, {\it Mat. Ves.}, {\bf 54} (2002) 145--149.

\bibitem{m18} M. Nedeljkov, M. Oberguggenberger, 
Interactions of delta shock waves in a strictly hyperbolic system of conservation laws, {\it J. Math. Anal. Appl.}, {\bf 344} (2008), no.2, 1143--1157.

\end{thebibliography}
\end{document}